\newtheorem{thm}{Theorem}[section]
\newtheorem{cor}[thm]{Corollary}
\newtheorem{lemma}[thm]{Lemma}
\theoremstyle{definition}
\newtheorem{df}[thm]{Definition}
\theoremstyle{remark}
\newcommand{\Z}{\mathbb{Z}}
\newcommand{\N}{\mathbb{N}}
\newcommand{\bS}{\overline{S}}
\newcommand{\bW}{\overline{W}}
\newcommand{\F}{{F}}
\newcommand{\RC}{{RC}}
\newcommand{\St}{{St}}
\newcommand{\T}{\mathcal{T}}
\newcommand{\cc}{{\rm CC}}
\newcommand{\cch}{${\rm CC}_H$}
\begin{document}
\title{One-relator groups with torsion are conjugacy separable}

\author{Ashot Minasyan}
\address[Ashot Minasyan]{School of Mathematics,
University of Southampton, Highfield, Sout\-hampton, SO17 1BJ, United Kingdom.}
\email{aminasyan@gmail.com}

\author{Pavel Zalesskii}
\address[Pavel Zalesskii]{Departamento de Matem\'atica, Universidade de
Bras\'{i}lia, 70910-900 Bras\'{\i}lia-DF, Brazil.}
\email{pz@mat.unb.br}
\thanks{The first author was partially supported by the EPSRC grant EP/H032428/1.}
\date{\footnotesize\today}

\keywords{One-relator groups with torsion, conjugacy separable}

\subjclass[2010]{20E26, 20F67}

\begin{abstract} We prove that one-relator groups with torsion are hereditarily conjugacy separable. Our argument is based on
a combination of recent results of Dani Wise and the first author. As a corollary we obtain that any quasiconvex subgroup of a one-relator group with torsion is also conjugacy separable.
\end{abstract}
\maketitle

\section{Introduction}
Recall that a group $G$ is said to be \textit{conjugacy separable} if for any two non-conjugate elements $x,y \in G$ there is a homomorphism from $G$ to a finite group $M$ such that
the images of $x$ and $y$ are not conjugate in $M$. Conjugacy separability  can be restated by saying that each conjugacy class $x^G=\{gxg^{-1} \mid g \in G\}$
is closed in the profinite topology on $G$. The group $G$ is said to be \textit{hereditarily conjugacy separable} if every finite index subgroup of $G$ is conjugacy separable.
Conjugacy separability is a natural algebraic analogue of solvability of the conjugacy problem in a group and has a number of applications (see, for example, \cite{M-RAAG}).
Any conjugacy separable group is residually finite, but the converse is false. Generally, it may be quite hard to show that a residually finite group is conjugacy separable.

In the recent breakthrough work \cite{Wise-qc-h} Dani Wise proved that one-relator groups with torsion possess so-called quasiconvex hierarchy, and groups with such hierarchy are
virtually compact special.
The class of \textit{special} (or $A$-\textit{special}, in the terminology of \cite{H-W-1}) cube complexes was originally introduced by Fred\'eric Haglund and Dani Wise in \cite{H-W-1},
as cube complexes in which hyperplanes enjoy certain combinatorial properties. They also showed that a cube complex is special if and only if
it admits a combinatorial local isometry to the Salvetti cube
complex (see \cite{Charney}) of some right angled Artin group.
It follows that  the  fundamental group of every special complex $\mathcal{X}$ embeds into
some right angled Artin group.

A group $G$ is said to be \emph{virtually compact special} if $G$ contains a finite index subgroup $P$ such that $P=\pi_1(\mathcal{X})$ for some compact special cube complex $\mathcal X$.
Thus Wise's result implies that any one-relator group $G$, with torsion, is (virtually) a subgroup of a right angled Artin group. In particular, $G$ is
residually finite, which answers an old question of G. Baumslag.

%It is therefore natural to ask whether one-relator groups with torsion are conjugacy separable.
An important fact, established by Haglund and Wise in \cite{H-W-1}, states
that the fundamental group $P$ of a compact special complex is a virtual retract of some finitely generated right angled Artin group.
From the work of the first author \cite{M-RAAG} it follows that $P$ is hereditarily conjugacy separable.
This shows that any one-relator group with torsion possesses a hereditarily conjugacy separable subgroup of finite index.
Unfortunately, in general conjugacy separability is not stable under passing to finite index overgroups (see \cite{Gor}).
The aim of this note is to prove the following:

\begin{thm}\label{thm:main} If $G$ is a one-relator group with torsion then $G$ is hereditarily conjugacy separable.
\end{thm}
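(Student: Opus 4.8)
The plan is to bootstrap the statement from Wise's theorem together with the results of the first author on right angled Artin groups, the bridge between the two being the centralizer condition $\cc$ of \cite{M-RAAG} (which, roughly, asks that centralizers of elements be well behaved with respect to the profinite topology).

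First I would apply Wise's theorem, recalled in the introduction, to obtain a finite index normal subgroup $N\lhd G$ which is the fundamental group of a compact special cube complex; restricting to a normal subgroup is harmless, because every finite index subgroup of a virtually compact special group is again virtually compact special (finite index subgroups of compact special groups correspond to finite covers of the complex, which stay compact and special). By the theorem of Haglund and Wise, $N$ is a virtual retract of some finitely generated right angled Artin group $A$, and the same holds for every finite index subgroup of $N$: given a retraction $r\colon A'\to N$ with $A'\le A$ of finite index and a finite index subgroup $N'\le N$, the preimage $r^{-1}(N')$ is a finite index subgroup of $A'$ (and of $A$) containing $N'$ and retracting onto it. I would then invoke \cite{M-RAAG}: a virtual retract of a right angled Artin group is hereditarily conjugacy separable and satisfies the centralizer condition $\cc$. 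In particular $N$ is conjugacy separable and satisfies $\cc$.

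The core of the argument is a criterion from \cite{M-RAAG} to the effect that, once a finite index normal subgroup satisfies $\cc$, the obstruction exhibited in Goryaga's example \cite{Gor} disappears: conjugacy separability — and with it the property $\cc$, hence hereditary conjugacy separability — is transmitted to the overgroup. Applied to the pair $N\lhd G$ this yields that $G$ is (hereditarily) conjugacy separable. For the hereditary conclusion phrased directly, I would note that an arbitrary finite index subgroup $G'\le G$ contains the finite index normal subgroup $N\cap G'$ of $G'$; being a finite index subgroup of $N$, the group $N\cap G'$ is a virtual retract of $A$ and so satisfies $\cc$, whence the same criterion applied to $N\cap G'\lhd G'$ shows that $G'$ is conjugacy separable. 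As $G'$ is arbitrary, $G$ is hereditarily conjugacy separable.

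The step I expect to be the main obstacle is verifying the hypotheses of that criterion in the exact form given in \cite{M-RAAG}: above all that "virtual retract of a right angled Artin group" really delivers the centralizer condition in the strength needed to control conjugacy of \emph{all} elements of the overgroup $G$, not merely of elements lying in $N$, together with whatever residual finiteness and normality bookkeeping the criterion demands. This is precisely the point at which one must go beyond conjugacy separability of $N$ alone — which by \cite{Gor} does not pass up to $G$ — and exploit the extra force of $\cc$ supplied by \cite{M-RAAG}.
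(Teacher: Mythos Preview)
Your proposal has a genuine gap at exactly the point you yourself flag as the main obstacle. The ``criterion from \cite{M-RAAG}'' you invoke---that if a finite index normal subgroup $N\lhd G$ satisfies $\cc$ then conjugacy separability passes up to $G$---does not appear in \cite{M-RAAG} in that form, and you do not prove it. What \cite{M-RAAG} supplies is Lemma~\ref{lem:CCH->sep_c_c_for_sbgps}: if $x\in H$ satisfies $\cc_H$ (the centralizer condition \emph{in $H$ itself}) and $x^H$ is separable in $H$, then $x^{H_1}$ is separable for finite index $H_1\leqslant H$. The property you extract from the virtual-retract-of-RAAG structure is $\cc_N$, a statement about $C_N(x)$ inside $\widehat N$; it says nothing directly about $C_G(x)$ inside $\widehat G$, gives no control over elements $x\in G\setminus N$, and does not by itself show that $x^G$ is separable in $G$. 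Note also that \cite[Prop.~3.2]{M-RAAG} runs the wrong way for your purposes: it deduces $\cc$ \emph{from} hereditary conjugacy separability, not conversely.

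The paper closes this gap by establishing $\cc_H$ and separability of $x^H$ directly, via the one-relator structure rather than the RAAG embedding. It argues by induction on Wise's repetition complexity $\RC(W)$, passes to $H=G*\Z$, and uses its presentation as an HNN-extension of a simpler one-relator group $K$ with free (Magnus) associated subgroups. For infinite-order $x$, Newman's theorem that centralizers in $H$ are cyclic yields uniqueness of $l$-th roots, and this is what allows a reduction to conjugacy inside the hereditarily conjugacy separable subgroup $L\lhd H$ you describe; so your intuition is vindicated there, but only thanks to this special feature of one-relator groups with torsion. For torsion $x$ the argument is substantially harder: one constructs homomorphisms from $H$ to virtually free HNN-extensions of finite quotients of $K$, uses Lemma~\ref{lem:conj_in_HNN} (Bass--Serre tree) to control conjugacy and centralizers there, uses Lemma~\ref{lem:free_sbgp-cd} to keep the image of $x$ away from conjugates of the free associated subgroups, and invokes the inductive hypothesis on $K$ together with Lemma~\ref{lem:for_a_given_P_sep_cc->CCH} to verify $\cc_H$. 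None of this is a consequence of $G$ merely possessing a virtually special subgroup of finite index.
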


This theorem answers positively  Question 8.69  in Kourovka Notebook \cite{K},  posed by  C.Y.~Tang. This question was also raised in \cite{T} in 1982; its special cases have
been considered in \cite{T} and \cite{A-T}.

As a consequence of Theorem \ref{thm:main} we also derive
\begin{cor}\label{cor:qc-cs} If $G$ is a one-relator group with torsion then every quasiconvex subgroup of $G$ is conjugacy separable.
\end{cor}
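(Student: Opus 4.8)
The plan is to deduce Corollary~\ref{cor:qc-cs} from Theorem~\ref{thm:main} together with the geometry of one-relator groups with torsion. First, recall that a one-relator group with torsion $G=\langle X \mid w^n\rangle$, with $n\ge 2$ and $w$ cyclically reduced and not a proper power, is word-hyperbolic: B.B.~Newman's spelling theorem shows that this presentation satisfies Dehn's algorithm (any nonempty word representing $1$ contains a subword of length exceeding $(n-1)|w|$ of a cyclic conjugate of $w^{\pm n}$, and $(n-1)|w|>\tfrac12\,n|w|$ when $n\ge 2$), whence $G$ satisfies a linear isoperimetric inequality. Moreover, by Wise's theorem quoted in the introduction, $G$ is virtually compact special. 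The main external input I would invoke is the theorem of Haglund and Wise that in a word-hyperbolic virtually compact special group every quasiconvex subgroup is a \emph{virtual retract}: given a quasiconvex $H\le G$, there exist a finite-index subgroup $K\le G$ containing $H$ and a retraction $\rho\colon K\to H$ (that is, $\rho|_H=\mathrm{id}_H$).

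Second, I would isolate the elementary observation that a retract of a conjugacy separable group is conjugacy separable. Suppose $\rho\colon K\to H$ is a retraction and $x,y\in H$ are conjugate in $K$, say $gxg^{-1}=y$ with $g\in K$; applying $\rho$ and using $\rho(x)=x$, $\rho(y)=y$ gives $\rho(g)\,x\,\rho(g)^{-1}=y$ with $\rho(g)\in H$, so $x$ and $y$ are already conjugate in $H$. Hence non-conjugate elements of $H$ stay non-conjugate in $K$, and a finite quotient of $K$ separating their conjugacy classes restricts to a finite quotient of $H$ doing the same; thus $H$ inherits conjugacy separability from $K$. Applying the same argument to $\rho^{-1}(H_0)$ for a finite-index subgroup $H_0\le H$ (which has finite index in $K$, and onto which $\rho$ restricts to a retraction) shows in fact that a retract of a hereditarily conjugacy separable group is hereditarily conjugacy separable, although only plain conjugacy separability is needed for the corollary.

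Finally I would assemble the pieces: given a quasiconvex subgroup $H$ of a one-relator group with torsion $G$, the group $G$ is hereditarily conjugacy separable by Theorem~\ref{thm:main}, so the finite-index subgroup $K$ supplied by the Haglund--Wise virtual-retract theorem is conjugacy separable; since $H$ is a retract of $K$, the second step yields that $H$ is conjugacy separable (indeed hereditarily so). The only point that really demands care is the invocation of the Haglund--Wise theorem in the virtually-special rather than the special setting, together with the verification that $G$ is word-hyperbolic so that ``quasiconvex'' carries its intended meaning; everything else is formal, which is why this is a corollary rather than a theorem.
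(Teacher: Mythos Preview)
Your approach is genuinely different from the paper's, and the point you yourself flag as ``the one that demands care'' is in fact a gap. The canonical completion--retraction machinery of Haglund and Wise is set up for \emph{special} cube complexes: if $P\le G$ is the compact special finite-index subgroup, then a quasiconvex $H\le G$ yields a quasiconvex $H\cap P\le P$, and it is $H\cap P$ (not $H$) that is a virtual retract of $P$, hence of $G$. Since $G$ has torsion it does not even embed in a right-angled Artin group, so there is no way to run the retraction argument directly on $H$. Your argument therefore proves that $H\cap P$ is (hereditarily) conjugacy separable, but---as the paper itself stresses, citing Goryaga---conjugacy separability does not pass to finite-index overgroups, so you cannot conclude that $H$ is conjugacy separable from this alone. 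This is exactly the obstacle that Theorem~\ref{thm:main} is designed to overcome for $G$, and it reappears for $H$.

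The paper's proof avoids this by using only the weaker consequence of Haglund--Wise that \emph{does} pass to the virtually special setting, namely separability of quasiconvex subgroups. It then exploits a feature specific to one-relator groups with torsion: by Newman's theorem every nontrivial centralizer $C_G(x)$ is cyclic, so for $x\in H\setminus\{1\}$ one has $x=g^k$ with $C_G(x)=\langle g\rangle$, and hence $C_G(x)H$ is a finite union of left cosets of $H$; since $H$ is separable, so is $C_G(x)H$. Hereditary conjugacy separability of $G$ (Theorem~\ref{thm:main}) gives that $x^G$ is closed and that $x$ satisfies ${\rm CC}_G$, and then Lemma~\ref{lem:CCH->sep_c_c_for_sbgps} yields that $x^H$ is closed in $G$, hence in $H$. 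Thus the paper trades your stronger black box (virtual retraction in the presence of torsion) for a weaker one (separability) plus the cyclic-centralizer structure; your route would be cleaner if the stronger black box were available, but as stated it is not.
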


Our proof of Theorem \ref{thm:main} uses the above mentioned results of Wise, Haglund-Wise and the first author, and employs the
quasiconvex hierarchy for one-relator groups with torsion, that was investigated by Wise in \cite{Wise-qc-h}.

\section{Background on one-relator groups with torsion}\label{sec:backgr}
Let
\begin{equation}\label{eq:G}
G=\langle S \,\|\, W^n \rangle
\end{equation}
be a \emph{one-relator group with torsion}, where $S$ is a finite alphabet,
$n \ge 2$ and  $W$ is a cyclically reduced word, which is not a proper power in the free group $\F(S)$.
%Without loss of generality we can suppose that the set $S$ is finite.

Newman's spelling theorem \cite[Thm. 3]{Newman} (see also \cite[IV.5.5]{L-S}) implies that every freely reduced word over $S^{\pm 1}$, representing the identity element of $G$, contains a subword of $W^n$
of length strictly greater than $(n-1)/n$ times the length of $W^n$. Since $(n-1)/n \ge 1/2$ it follows that the presentation \eqref{eq:G} satisfies Dehn's algorithm (\cite[IV.4]{L-S}); in particular $G$ has a
linear Dehn function, and hence it is word hyperbolic. For the background on hyperbolic groups and quasiconvex subgroups the reader is referred to \cite{Mih}.

Another important fact, proved by Newman in \cite[Thm. 2]{Newman} (see also \cite[p. 956]{K-S}), states that centralizers of non-trivial elements in one-relator groups with torsion are cyclic.

Many results about one-relator groups are proved using induction on some complexity depending on the word $W$. In this paper we will use the \emph{repetition complexity} $\RC(W)$ of $W$
employed by Wise in \cite{Wise-qc-h}. This is defined as the difference between the length  of $W$, and the number of distinct letters from $S$ that occur in $W$.
For example, if $S=\{a,b,c\}$ then $\RC(ab^2a^{-1}c^{-3})=7-3=4$.

Start with a one relator-group $G$ given by presentation \eqref{eq:G}.
Recall that a \emph{Magnus subgroup} $M$ of $G$ is a subgroup generated by a subset $U\subset S$ such that $U$ omits at least one generator appearing in $W$.
By the famous Magnus's Freiheitssatz, $M$ is free and $U$ is its free generating set.

Observe that if $\RC(W)=0$ then every letter appears in $W$ exactly once. In this case, using Tietze transformations,  it is easy to see that $G$ is isomorphic to the free product of a free group of rank $|S|-1$
with the cyclic group of order $n$.

Assume, now, that $\RC(W)>0$. Then, following  \cite[18.2]{Wise-qc-h}, one can let $H=G* \langle t \rangle$, and represent $H$ as an HNN-extension of another one-relator group
$K=\langle \bS \,\|\, \bW^n\rangle$, where $|\bS|<\infty$, $\bW$ is some cyclically reduced word in the free group $\F(\bS)$, and
the associated subgroups are Magnus subgroups $M_1,M_2$ of $K$. In other words,
there are subsets $U_1,U_2 \subset \bS$, each of which omits some letter of $\bW$, and a bijection $\alpha: U_1 \to U_2$ such that $M_i=\langle U_i \rangle$, $i=1,2$, and
$H$ has the presentation
\begin{equation}\label{eq:H}
H= \langle  \bS,t \,\|\, \bW^n, tut^{-1}=\alpha(u) \mbox{ for all }u \in U_1\rangle.
\end{equation}

Moreover, in  \cite[18.3]{Wise-qc-h} Wise shows that one can do this in such a way that $\RC(\bW)<\RC(W)$.

\begin{lemma}\label{lem:hcs_sbgp} The group $H$ defined above contains a finite index normal subgroup $L \lhd H$ such that $L$ is hereditarily conjugacy separable.
\end{lemma}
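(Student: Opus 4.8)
The plan is to deduce this entirely from the three external inputs mentioned in the introduction: Wise's theorem that one-relator groups with torsion are virtually compact special \cite{Wise-qc-h}, the Haglund--Wise fact that the fundamental group of a compact special cube complex is a virtual retract of a finitely generated right angled Artin group \cite{H-W-1}, and the first author's result \cite{M-RAAG} that such groups are hereditarily conjugacy separable. Observe that for this lemma neither the HNN-structure of $H$ nor the inequality $\RC(\bW)<\RC(W)$ is needed; all that matters is that $H\cong G*\langle t\rangle$ with $G$ a one-relator group with torsion, so I work only with that.

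First I would fix, using Wise's theorem, a finite index subgroup of $G$ which is the fundamental group of a compact special cube complex; since finite covers of compact special complexes are again compact special, I may pass to a finite cover and assume this subgroup $P$ is \emph{normal} in $G$, say $[G:P]=m$. Let $\psi\colon H=G*\langle t\rangle\to G/P$ be the epimorphism which restricts to the quotient map on $G$ and sends $t$ to $1$, and put $N=\ker\psi$, so $[H:N]=m$. Since $H$ is finitely generated, so is $N$, and since $N\lhd H$ we have $N\cap gGg^{-1}=g(N\cap G)g^{-1}=gPg^{-1}\cong P$ and $N\cap g\langle t\rangle g^{-1}=g\langle t\rangle g^{-1}\cong\Z$ for every $g\in H$. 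By the Kurosh subgroup theorem, $N$ is therefore isomorphic to a free product of a finitely generated free group together with finitely many copies of $P$ and finitely many copies of $\Z$.

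Next I would note that $N$ is itself the fundamental group of a compact special cube complex. Realizing $P$ as $\pi_1(\mathcal X)$ for a compact special $\mathcal X$, form the wedge, at a common $0$-cube, of finitely many copies of $\mathcal X$ and finitely many circles; this is a compact cube complex whose hyperplanes are precisely the hyperplanes of the summands, so that being embedded, two-sided, and free of self-osculation and inter-osculation are all inherited from the summands, while the only new hyperplane incidences occurring at the wedge $0$-cube are osculations between non-crossing hyperplanes, which are allowed. Hence this wedge is compact special and its fundamental group is $N$. By \cite{H-W-1} and \cite{M-RAAG} it follows that $N$ is hereditarily conjugacy separable.

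Finally, to get a normal subgroup I would take $L=\bigcap_{h\in H}hNh^{-1}$, the normal core of $N$ in $H$; then $L\lhd H$, $[H:L]<\infty$, and $L$ has finite index in $N$. It remains only to record that a finite index subgroup of a hereditarily conjugacy separable group is again hereditarily conjugacy separable --- indeed any finite index subgroup of $L$ has finite index in $N$ and is thus conjugacy separable --- so $L$ is as claimed. The single point that genuinely requires care is the assertion that a finite wedge of compact special cube complexes is compact special (equivalently, a free-product combination theorem for special groups); everything else is a routine assembly of the quoted theorems with the Kurosh subgroup theorem, and I do not expect any serious obstacle.
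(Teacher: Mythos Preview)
Your argument is correct, but the paper's proof is considerably shorter and more direct. The paper simply observes that $H$ itself is virtually compact special (this is what Wise proves in \cite[Ch.~18]{Wise-qc-h}; note that $H=\langle S,t\,\|\,W^n\rangle$ is again a one-relator group with torsion), so by Haglund--Wise $H$ has a finite index subgroup $L$ that is a virtual retract of a finitely generated right angled Artin group, and by \cite[Cor.~2.1]{M-RAAG} this $L$ is hereditarily conjugacy separable; one then passes to the normal core. Your route instead applies Wise's theorem only to $G$, and then uses the Kurosh subgroup theorem together with a wedge construction to build, by hand, a compact special complex for a finite index subgroup of $H=G*\langle t\rangle$. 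This is sound --- in particular your check that a finite wedge of compact special cube complexes at a $0$-cube is again special is fine, since the only new osculations at the wedge point occur between hyperplanes lying in different summands, and such hyperplanes never cross --- but the detour is unnecessary once one notices that Wise's theorem applies to $H$ directly. Two small remarks: your $N=\ker\psi$ is already normal in $H$, so the final passage to the normal core is vacuous ($L=N$); and while your approach does yield a more explicit description of the special complex, nothing downstream in the paper uses it.
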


\begin{proof} In \cite[Ch.  18]{Wise-qc-h} Wise shows that $H$ is virtually compact special. By the work of Haglund and Wise
from \cite[Ch. 6]{H-W-1}, $H$ contains a finite index subgroup $L$ such that $L$ is a virtual retract of some finitely generated right angled Artin group $A$.
Now, a result of the first author \cite[Cor. 2.1]{M-RAAG} implies that $L$ is hereditarily conjugacy separable.
\end{proof}

The next statement follows from a combination of results of Wise \cite{Wise-qc-h} and Haglund-Wise \cite{H-W-1}:

\begin{lemma} \label{lem:fi-sep} Let $P$ be a finite index subgroup of $K$ or $M_1$, or $M_2$. Then $P$ is closed in the profinite topology of $H$.
\end{lemma}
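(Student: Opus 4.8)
The plan is to recognise $P$ as a quasiconvex subgroup of $H$ and then to invoke the fact that quasiconvex subgroups of a virtually compact special hyperbolic group are closed in the profinite topology; the substantive work is the verification of quasiconvexity, while the separability input is precisely the combination of results of Wise \cite{Wise-qc-h} and Haglund--Wise \cite{H-W-1} quoted above.

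First I would observe that $H=G*\langle t\rangle$ is word hyperbolic, since $G$ is hyperbolic (as recalled above) and a free product of a hyperbolic group with $\Z$ is again hyperbolic. Next I would argue that $K$, $M_1$ and $M_2$ are quasiconvex in $H$. The edge subgroups $M_1=\langle U_1\rangle$ and $M_2=\langle U_2\rangle$ of the HNN splitting \eqref{eq:H} are quasiconvex in $H$: this is part of the data of the quasiconvex hierarchy of $H$ constructed in \cite[Ch.~18]{Wise-qc-h}, in which every splitting is taken over a subgroup that is quasiconvex in the ambient group. Since a vertex group of a splitting of a hyperbolic group over quasiconvex edge groups is itself quasiconvex, the base group $K$ is quasiconvex in $H$ as well. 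Finally, a finite index subgroup of a quasiconvex subgroup of a hyperbolic group is again quasiconvex, so any $P$ as in the statement is quasiconvex in $H$.

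It then remains to pass from quasiconvexity to closedness in the profinite topology, and here I would argue as follows. By \cite{Wise-qc-h}, $H$ contains a finite index subgroup $H_1=\pi_1(\mathcal X)$ with $\mathcal X$ a compact special cube complex. Set $P_1:=P\cap H_1$; this is a finite index subgroup of $P$, so it remains quasiconvex in $H$, and in particular in $H_1$. By \cite[Ch.~6]{H-W-1}, a quasiconvex subgroup of the fundamental group of a compact special cube complex is a virtual retract of it, so $P_1$ is a virtual retract of $H_1$, and hence of $H$. Since $H$ is residually finite, any virtual retract of $H$ is closed in the profinite topology of $H$; thus $P_1$ is closed, and since $P$ is a finite union of left cosets of $P_1$, each of which is closed, $P$ is closed in the profinite topology of $H$.

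I expect the main obstacle to be the quasiconvexity of $K$, $M_1$ and $M_2$ in $H$: one must extract it carefully, either from the structure of the hierarchy in \cite{Wise-qc-h} (quasiconvexity of the edge groups of each hierarchy splitting, with quasiconvexity of the vertex groups then following via the action on the Bass--Serre tree), or, alternatively, from the quasiconvexity of Magnus subgroups in one-relator groups with torsion together with the quasiconvexity of $K$ in $H$. The remaining steps --- the reduction to a compact special finite index subgroup, the virtual retract theorem of \cite{H-W-1}, and the elementary observations that virtual retracts and finite index overgroups of closed subgroups are closed in the profinite topology --- are routine given the cited results.
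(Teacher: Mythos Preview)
Your proposal is correct and follows essentially the same approach as the paper: the paper's proof also shows that $P$ is quasiconvex in the hyperbolic group $H$ (citing \cite[Lemma~18.8]{Wise-qc-h} directly for the quasiconvexity of $K$, $M_1$, $M_2$) and then invokes the separability of quasiconvex subgroups in a virtually compact special hyperbolic group via \cite[Thm.~7.3, Lemma~7.5]{H-W-1}. Your version unpacks the virtual-retract mechanism behind the latter citation a bit more explicitly, but the strategy is identical.
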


\begin{proof} The group $H$ is hyperbolic as a free product of two hyperbolic groups, and
by \cite[Lemma 18.8]{Wise-qc-h} $K$, $M_1$ and $M_2$ are all quasiconvex subgroups of $H$. Since a finite index subgroup of a quasiconvex subgroup is itself quasiconvex, it follows that
$P$ is quasiconvex in $H$.

As we already mentioned above, \cite[Cor. 18.3]{Wise-qc-h} states that $H$ is virtually compact special. Now we can use \cite[Thm. 7.3, Lemma 7.5]{H-W-1}, which imply that
any quasiconvex subgroup of $H$ is separable in $H$. Thus the lemma is proved.
\end{proof}

\section{Some auxiliary facts}
First let us specify some notation. If $A$ is a group and $C,D \subseteq A$, then $C^D$ will denote the subset defined by
$C^D=\{dcd^{-1}\mid c \in C,d \in D\}$. If $x \in A$ and $E\leqslant A$ then $C_E(x)=\{g \in E \mid gx=xg\}$ will denote the centralizer of $x$ in $E$.

Recall that a subset $C$ of a group $A$ is said to be \emph{separable} if $C$ is closed in the profinite topology of $A$. This is equivalent to the following property:
for every $y \in A \setminus C$  there exist a finite group $Q$ and an epimorphism $\psi:H \to Q$ such that $\psi(y)\notin \psi(C)$ in $Q$.

The following notion is helpful for proving hereditary conjugacy separability of groups. It is similar to \cite[Def. 3.1]{M-RAAG}.

\begin{df}\label{df:CCH} Let $H$ be a group and $x \in H$. We will say that the element $x$
satisfies the \textit{Centralizer Condition  in $H$} (briefly,
$\cc_H$), if for every finite index  normal subgroup $P \lhd H$
there is a finite index normal subgroup $N \lhd H$ such that $N
\leqslant P$ and $ C_{H/N} (\psi(x))  \subseteq \psi\left (C_H(x)
P\right)$ in $H/N$, where $\psi: H \to H/N$ is the natural
homomorphism.
%Equivalently, in the profinite language the
%condition reads as $C_{\widehat H}(x)=\overline{C_H(x)}$, where
%the closure is taken in $\widehat H$.
\end{df}

The condition \cch{} defined above is actually quite natural from the viewpoint of the profinite completion $\widehat H$ of $H$.
Indeed, in \cite[Prop. 12.1]{M-RAAG} it is shown that if $H$ is residually finite then $x \in H$ has \cch{} if and only if $C_{\widehat{H}}(x)=\overline{C_{H}(x)}$,
where the right-hand side is the closure of $C_H(x)$ in the profinite completion $\widehat H$.

The next two lemmas were proved by the first author in \cite[Lemmas 3.4 and 3.7]{M-RAAG}. The first one shows why the Centralizer Condition is useful, and the
second lemma provides a partial converse to the first one.

\begin{lemma}\label{lem:CCH->sep_c_c_for_sbgps} Suppose that $H$ is a group, $H_1 \leqslant H$ and $x \in H$.
Assume that the element $x$ satisfies {\cch} and the conjugacy class $x^H$ is separable in $H$.
If the double coset $C_H(x)H_1$ is separable in $H$, then the $H_1$-conjugacy class $x^{H_1}$ is also
separable in $H$.
\end{lemma}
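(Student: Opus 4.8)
The plan is to prove directly that the subset $x^{H_1}$ is closed in the profinite topology of $H$. So I would fix an arbitrary $y\in H\setminus x^{H_1}$ and look for a finite quotient $\psi\colon H\to H/N$ with $\psi(y)\notin\psi(x^{H_1})=\psi(x)^{\psi(H_1)}$, splitting into two cases according to whether $y$ is conjugate to $x$ in all of $H$. If $y\notin x^H$, then since $x^{H_1}\subseteq x^H$ and $x^H$ is assumed separable in $H$, any finite quotient of $H$ separating $y$ from $x^H$ finishes this case immediately.

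The substantial case is $y\in x^H$, so write $y=gxg^{-1}$ for some $g\in H$. Here the elementary but crucial point is that $y\in x^{H_1}$ if and only if $g\in H_1\,C_H(x)$, because $gxg^{-1}=hxh^{-1}$ is equivalent to $h^{-1}g\in C_H(x)$. Since taking inverses is a homeomorphism of the profinite topology and $(C_H(x)H_1)^{-1}=H_1C_H(x)$, separability of the double coset $C_H(x)H_1$ yields separability of $H_1C_H(x)$; as $g\notin H_1C_H(x)$, I can choose a finite-index normal subgroup $P\lhd H$ with $g\notin H_1C_H(x)P$. Feeding this $P$ into \cch{} for $x$ produces a finite-index normal subgroup $N\lhd H$ with $N\leqslant P$ and $C_{H/N}(\psi(x))\subseteq\psi\bigl(C_H(x)P\bigr)$, where $\psi\colon H\to H/N$ is the quotient map.

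It then remains to check that $\psi(y)\notin\psi(x^{H_1})$. If not, then $\psi(y)=\psi(h)\psi(x)\psi(h)^{-1}$ for some $h\in H_1$, which together with $\psi(y)=\psi(g)\psi(x)\psi(g)^{-1}$ forces $\psi(h^{-1}g)\in C_{H/N}(\psi(x))\subseteq\psi(C_H(x)P)$. Pulling this back through $\psi$ and using $N\leqslant P$ gives $h^{-1}g\in C_H(x)P$, hence $g\in H_1C_H(x)P$, contradicting the choice of $P$. Thus $\psi$ is the desired quotient and $x^{H_1}$ is separable in $H$.

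I do not expect a real obstacle here: the force of the statement is concentrated in its three hypotheses, and the argument is essentially a diagram chase. The only things that require care are the bookkeeping with the sides of cosets and the direction of conjugation --- in particular identifying separability of $C_H(x)H_1$ with that of $H_1C_H(x)$ --- and making sure that the subgroup $P$ extracted from separability of the double coset is precisely the one passed to Definition~\ref{df:CCH}, so that the resulting $N\leqslant P$ controls $C_{H/N}(\psi(x))$ tightly enough to exclude conjugators $h\in H_1$ that would only appear "by accident" in the finite quotient $H/N$.
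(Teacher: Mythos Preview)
Your argument is correct. The paper itself does not give a proof of this lemma; it simply cites \cite[Lemma~3.4]{M-RAAG}, and the proof there proceeds exactly along the lines you describe: split into the cases $y\notin x^H$ and $y=gxg^{-1}$ with $g\notin H_1C_H(x)$, separate $g$ from the (closed) double coset by a finite-index normal subgroup $P$, invoke \cch{} to produce $N\leqslant P$, and then check that no $h\in H_1$ can conjugate $\psi(x)$ to $\psi(y)$ in $H/N$ without forcing $g\in H_1C_H(x)P$.
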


\begin{lemma}\label{lem:for_a_given_P_sep_cc->CCH} Let $H$ be a group.
Suppose that $x \in H$, $P\lhd H$ and $|H:P|<\infty$. If the
subset $x^{P}$ is separable in $H$, then there is a finite index normal subgroup $N \lhd H$
such that $N \leqslant P$ and $ C_{H/N} (\psi(x))  \subseteq \psi\left (C_H(x) P\right)$
in $H/N$ (where $\psi:H \to H/N$ denotes the natural homomorphism).
\end{lemma}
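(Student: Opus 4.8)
The plan is to exploit the finiteness of $|H:P|$ to reduce the desired inclusion to finitely many instances of the hypothesis that $x^P$ is separable in $H$.

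First I would record the basic group-theoretic fact that, since $P \lhd H$, the set $C_H(x)P$ is a subgroup of $H$ containing $P$, hence a union of finitely many cosets of $P$. The key observation is: if a coset $gP$ is \emph{not} contained in $C_H(x)P$, then $x \notin g\,x^P\,g^{-1}$. Indeed, $x \in g\,x^P\,g^{-1}$ would give $g^{-1}xg = pxp^{-1}$ for some $p \in P$, i.e. $gp \in C_H(x)$, whence $g \in C_H(x)p^{-1} \subseteq C_H(x)P$ and $gP \subseteq C_H(x)P$, a contradiction. I would also note that for $h = gp \in gP$ one has $hxh^{-1} = g(pxp^{-1})g^{-1}$, so $\{hxh^{-1} \mid h \in gP\} = g\,x^P\,g^{-1}$, and this set is closed in the profinite topology of $H$ because $x^P$ is (conjugation by a fixed element is a homeomorphism of the profinite topology, as it is an automorphism of $H$ and so permutes the finite index subgroups).

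Next, let $g_1P,\dots,g_rP$ be the finitely many cosets of $P$ not contained in $C_H(x)P$. By the previous step each $g_j\,x^P\,g_j^{-1}$ is closed in $H$ and does not contain $x$; since the cosets of finite index normal subgroups form a basis of the profinite topology, for each $j$ there is a finite index normal subgroup $N_j \lhd H$ with $N_j \leqslant P$ (intersecting with $P$ if needed) and $xN_j \cap g_j\,x^P\,g_j^{-1} = \varnothing$. Set $N = \bigcap_{j=1}^r N_j$, a finite index normal subgroup of $H$ contained in $P$; then $xN \cap g_j\,x^P\,g_j^{-1} = \varnothing$ for every $j$.

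Finally I would verify $C_{H/N}(\psi(x)) \subseteq \psi\left(C_H(x)P\right)$. Take $\psi(g) \in C_{H/N}(\psi(x))$ and write $g = g_i p$ with $p \in P$, where $g_iP$ is the coset of $g$. Then $gxg^{-1} = g_i(pxp^{-1})g_i^{-1} \in g_i\,x^P\,g_i^{-1}$, while $\psi(g)\psi(x)\psi(g)^{-1} = \psi(x)$ forces $gxg^{-1} \in xN$. If $g_iP$ were one of $g_1P,\dots,g_rP$ this would contradict $xN \cap g_i\,x^P\,g_i^{-1} = \varnothing$; hence $g_iP \subseteq C_H(x)P$, so $g \in C_H(x)P$ and $\psi(g) \in \psi\left(C_H(x)P\right)$, as required. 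There is no serious obstacle here: the only points needing a little care are the reduction to finitely many cosets (precisely where $|H:P|<\infty$ is used) and the passage from ``closed and avoiding $x$'' to a single $N$, which relies on the profinite topology being defined by finite index normal subgroups.
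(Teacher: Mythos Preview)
Your argument is correct. Note, however, that the present paper does not actually give a proof of this lemma: it is imported from \cite[Lemma~3.7]{M-RAAG}, so there is no in-paper proof to compare against. Your approach --- list the finitely many $P$-cosets $g_1P,\dots,g_rP$ lying outside $C_H(x)P$, observe that $x\notin g_j\,x^P g_j^{-1}$ while each $g_j\,x^P g_j^{-1}$ is closed (being a translate of the closed set $x^P$), separate $x$ from each by a coset $xN_j$ of a finite index normal subgroup, and take $N=\bigcap_j N_j$ --- is precisely the natural one, and it is essentially the argument given in \cite{M-RAAG}.

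One cosmetic remark: in the degenerate case $r=0$ (equivalently $H=C_H(x)P$) the empty intersection $\bigcap_{j=1}^{0} N_j$ would conventionally be $H$, which is not contained in $P$; to keep the write-up uniform simply set $N=P\cap\bigcap_{j=1}^{r} N_j$ (this is harmless since you already arrange $N_j\leqslant P$), or dispose of this trivial case by taking $N=P$ directly.
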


The proof of Theorem \ref{thm:main} will also use the following two auxiliary statements.

\begin{lemma} \label{lem:conj_in_HNN} Let $A$ be a group  and let $C_1,C_2 \leqslant A$ be isomorphic subgroups with a fixed isomorphism $\varphi:C_1 \to C_2$.
Let $B=\langle A,t \,\|\, tgt^{-1}=\varphi(g) \mbox{ for all }g \in C_1\rangle$ be the corresponding HNN-extension of $A$. Suppose that $x,y \in A$ are elements such that
$y \notin x^A$ and $x \notin C_i^A$ for $i=1,2$. Then $y \notin x^B$ and $C_B(x)=C_A(x)$ in $B$.
\end{lemma}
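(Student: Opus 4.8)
The plan is to use the standard Bass–Serre theory of HNN-extensions, in the form of the conjugacy criterion for HNN-extensions (the ``Collins lemma'', see \cite[IV.2]{L-S}). Recall that this criterion describes when two cyclically reduced elements of an HNN-extension $B$ are conjugate: if $x,y \in A$ are conjugate in $B$, then there is a sequence of ``conjugations by $t^{\pm1}$ followed by elements of $A$'' carrying $x$ to $y$, and at each intermediate stage the relevant element must lie in one of the associated subgroups $C_1$ or $C_2$ in order for a $t^{\pm1}$-conjugation to be applicable. The hypothesis $x \notin C_i^A$ for $i=1,2$ is precisely what prevents any such $t^{\pm1}$-move from being made: since $x$ is not conjugate (in $A$) into $C_1$ or $C_2$, no element of $x^A$ lies in $C_1\cup C_2$, so the syllable length of $x$ (which is $0$ in $t$) cannot be decreased or altered. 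Hence any conjugator must already lie in $A$, which reduces $x^B\cap A = x^A$, and since $y\notin x^A$ by assumption, we conclude $y \notin x^B$.

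For the centralizer statement, I would argue similarly. Clearly $C_A(x)\subseteq C_B(x)$. For the reverse inclusion, suppose $g\in B$ with $gxg^{-1}=x$. Writing $g$ in reduced form with respect to the HNN-structure, if $g\notin A$ then the equation $gxg^{-1}=x$ together with the conjugacy criterion forces $x$ to be conjugate in $A$ into $C_1$ or into $C_2$ (one reads off a ``pinch'' that must occur), contradicting the hypothesis $x\notin C_i^A$. More precisely, an element $g$ of syllable length $\geq 1$ conjugating $x\in A$ to an element of $A$ would, via Britton's Lemma applied to the reduced word $gxg^{-1}x^{-1}=1$, produce a subword $t^{\varepsilon} a t^{-\varepsilon}$ (with $a$ coming partly from $x$) lying in the appropriate associated subgroup, and unwinding this shows a conjugate of $x$ lies in $C_1$ or $C_2$. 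Therefore $g\in A$, so $g\in C_A(x)$, giving $C_B(x)=C_A(x)$.

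The main technical point is setting up the application of Britton's Lemma / the Collins conjugacy lemma cleanly, being careful that $x$ (having $t$-syllable length $0$) is already cyclically reduced, and that the only way to perform a reduction in the word $gxg^{-1}$ is to have a $t$-pinch, which requires an element of $\langle C_1, C_2\rangle$-type to appear; tracking that this element is a conjugate of $x$ itself is where one must be slightly careful, but it follows from the form of the pinch. I expect this bookkeeping with reduced words and pinches to be the only real obstacle; conceptually the statement is just the observation that if $x$ avoids the associated subgroups up to conjugacy, then the HNN-extension is ``invisible'' to $x$ as far as conjugacy and centralizers are concerned.
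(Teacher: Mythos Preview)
Your argument is correct: Collins' conjugacy criterion (for the first claim) and Britton's Lemma (for the centralizer claim) do exactly what you say. The key computation you allude to can be made precise as follows: if $g=a_0 t^{\varepsilon_1}a_1\cdots t^{\varepsilon_n}a_n$ is a reduced expression with $n\ge 1$, then in
\[
g x g^{-1}=a_0 t^{\varepsilon_1}\cdots t^{\varepsilon_n}\,(a_n x a_n^{-1})\,t^{-\varepsilon_n}\cdots t^{-\varepsilon_1}a_0^{-1}
\]
the only possible initial pinch is at the innermost position, forcing $a_n x a_n^{-1}\in C_i$ for the appropriate $i$, hence $x\in C_i^A$. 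So your ``bookkeeping'' worry is minimal.

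The paper, however, takes the geometric rather than the combinatorial route: it works directly with the Bass--Serre tree $\mathcal{T}$ for the HNN splitting. The vertex $v$ with $\St_B(v)=A$ is fixed by $x$; the hypothesis $x\notin C_i^A$ says $x$ fixes no edge incident to $v$, and connectedness of fixed-point sets in a tree then forces $v$ to be the \emph{unique} vertex fixed by $x$. Both claims fall out immediately: any $b\in B$ with $bxb^{-1}\in A$ must send $v$ to a fixed vertex of $bxb^{-1}$, but that element also fixes $v$, so $b\circ v=v$ and $b\in A$. Your approach and the paper's are the two standard, essentially dual, ways of handling conjugacy in HNN-extensions; the tree argument avoids the normal-form bookkeeping entirely and makes the ``$x$ is invisible to the HNN structure'' intuition you mention literally visible as a one-point fixed set, while your approach has the virtue of staying within the combinatorial framework of \cite{L-S} that the paper already cites.
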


\begin{proof} Let $\T$ be the Bass-Serre tree associated to the splitting of $B$ as an HNN-extension of $A$. Then $x$ fixes a particular vertex $v$ of $\T$, where the stabilizer $\St_B(v)$ of $v$ in $B$ is equal to $A$.
The stabilizer of any edge $e$, adjacent to $v$, is
$C_i^a$ for some $i \in \{1,2\}$ and some $a \in A$ (see \cite{Serre}). Therefore, the assumptions imply that $x$ does not fix any edge of $\T$ adjacent to $v$. Since the fixed point set of an isometry of a tree
is connected, it follows that $v$ is the only vertex of $\T$ fixed by $x$.

Arguing by contradiction, suppose that $y \in x^B$, thus there is
$b \in B$ such that $y=bxb^{-1}$ in $B$. Then $b \circ v$ is the
only vertex of $\T$ fixed by $y$. Since $A=\St_B(v)$ and $y \in A$
the latter implies that $b \circ v=v$. Hence $b \in \St_B(v)=A$,
i.e., $y \in x^A$, contradicting  one of the assumptions. Thus $y
\notin x^B$, as claimed.

For the final assertion, suppose that $b \in C_B(x)$, i.e., $x=bxb^{-1}$. The same argument as above shows that $b \in A$, hence $b \in C_A(x)$.
\end{proof}

\begin{lemma}\label{lem:free_sbgp-cd} Let  $A$ be a  group with a free subgroup $F \leqslant A$ and let $g \in A\setminus\{1\}$ be an element of finite order.
Suppose that every finite index subgroup of $F$ is separable in $A$.
Then there exists a finite index normal subgroup $N \lhd A$ such that $\psi(g) \notin \psi(F)^{A/N}$, where $\psi:A \to A/N$ denotes the natural epimorphism.
\end{lemma}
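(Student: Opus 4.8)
The plan is to work in the profinite completion $\widehat{A}$ of $A$. I would show that the image of $g$ in $\widehat{A}$ cannot be conjugated into the closure of $F$ — for the simple reason that this closure is torsion-free while the image of $g$ has finite order $\geqslant 2$ — and then extract the required finite quotient by a standard compactness argument.

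First I would observe that the hypothesis already forces $A$ to be residually finite. Indeed, for any $1\neq x\in A$, either $x\notin F$, in which case $F$ is itself a finite index subgroup of $F$ avoiding $x$, or $x\in F$, in which case the residual finiteness of the free group $F$ produces a finite index subgroup $F_0\leqslant F$ with $x\notin F_0$. In either case $x$ lies outside a finite index subgroup of $F$, which by hypothesis is separable in $A$; hence $x$ lies outside some finite index subgroup of $A$, and therefore outside some finite index normal subgroup of $A$. In particular the image $\bar g$ of $g$ in $\widehat A$ is non-trivial and has finite order, equal to the order of $g$.

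The key point is then to identify the closure $\overline F$ of $F$ in $\widehat A$ with the profinite completion $\widehat F$. This is exactly where the separability hypothesis is used: since every finite index subgroup of $F$ is closed in the profinite topology of $A$, that topology induces on $F$ precisely its own profinite topology, so the canonical map $\widehat F\to\widehat A$ is injective with image $\overline F$. Because $F$ is free, $\widehat F$ is torsion-free (free profinite groups have cohomological dimension at most one), and hence so is $\overline F$. Consequently no $\widehat A$-conjugate of $\bar g$ can belong to $\overline F$, as such a conjugate would be a non-trivial finite-order element of the torsion-free group $\overline F$; that is, $\bar g\notin\overline F^{\,\widehat A}$.

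Finally, $\overline F^{\,\widehat A}$ is the image of the compact space $\widehat A\times\overline F$ under the continuous conjugation map, hence it is closed in $\widehat A$, and it does not contain $\bar g$; by compactness there is an open normal subgroup $\overline N\leqslant\widehat A$ with $\bar g\overline N\cap\overline F^{\,\widehat A}=\emptyset$. Letting $N\lhd A$ be the corresponding finite index normal subgroup, so that $A/N\cong\widehat A/\overline N$ and $\psi:A\to A/N$ is the natural epimorphism, one checks that $\psi(F)^{A/N}$ is exactly the image of $\overline F^{\,\widehat A}$ in $A/N$, which therefore misses $\psi(g)$, the image of $\bar g$; this is the desired conclusion. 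The step that needs the most care is the identification $\overline F\cong\widehat F$, i.e.\ verifying that the separability hypothesis genuinely recovers the full profinite topology on $F$; the torsion-freeness of $\widehat F$ and the concluding compactness manipulations are routine facts about profinite groups.
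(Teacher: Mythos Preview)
Your proposal is correct and follows essentially the same route as the paper's proof: both pass to the profinite completion $\widehat A$, use the separability hypothesis to identify $\overline F$ with $\widehat F$, invoke torsion-freeness of the free profinite group $\widehat F$ to see that $\bar g\notin\overline F^{\,\widehat A}$, and then descend to a finite quotient. The only cosmetic difference is that the paper phrases the final extraction via the inverse limit $\overline F^{\,\widehat A}=\varprojlim_N\psi_N(F)^{A/N}$ while you phrase it as a compactness/closedness argument; these are equivalent formulations of the same step.
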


\begin{proof} Since every finite index subgroup of $F$ is separable in $A$ and $F$ is residually finite, the assumptions imply that
$A$ is residually finite and the profinite topology of $A$ induces the full profinite
topology on $F$. Therefore by Lemma 3.2.6 in \cite{RZ}  the
closure $\overline F$, of $F$, in the profinite completion $\widehat A$, of $A$,
is naturally isomorphic to the profinite completion $\widehat F$ of $F$. Then
in the profinite completion $\widehat A$, of $A$, the claim of the lemma reads as
follows: $g$ is not conjugate to $\overline F\cong \widehat F$ in  $\widehat A$.
Indeed, $\overline F^{\widehat A}=\lim\limits_{\displaystyle\longleftarrow}\psi_N(F)^{A/N}$, where $\psi_N:F \to F/N$ denotes the natural epimorphism and
the inverse limit is taken over the directed set of all finite index
normal subgroups $N\triangleleft_f A$. Therefore
$\psi_N(g) \notin \psi_N(F)^{A/N}$ for some $N\triangleleft_f A$ if and only if
$g\notin \overline{F}^{\widehat A}$. But $\overline{F}\cong \widehat F$ is torsion-free by Proposition 22.4.7 in \cite{FJ}, hence the result follows.
\end{proof}

\section{Proofs}
\begin{proof}[Proof of Theorem \ref{thm:main}] Let $G$ be a one-relator group given by the presentation \eqref{eq:G}.
The result will be proved by induction on $\RC(W)$. If $\RC(W)=0$ then $G$ is isomorphic to the free product ${F}_{m}*\Z/n\Z$, where $m=|S|-1$ and $F_m$ is the free group of rank $m$.
Therefore $G$ is virtually free and so it is hereditarily conjugacy separable by Dyer's theorem \cite{Dyer}.

Thus we can further assume that $\RC(W)>0$. Let $H\cong G *\Z$, $K$, $M_1$, $M_2$, $U_1$, $U_2$  and $\alpha:M_1 \to M_2$ be as described in Section \ref{sec:backgr}.
Then $K=\langle \bS \,\|\, \bW^n\rangle$, where $RC(\bW)<\RC(W)$, and so
$K$ is hereditarily conjugacy separable by induction. Since $G$ is a retract of $H$, to prove the theorem it is enough to show that $H$ is hereditarily conjugacy separable (cf. \cite[Lemma 9.5]{M-RAAG}).

Observe that $H$ is itself a one-relator group with torsion. Therefore, by Newman's theorem \cite[Thm. 2]{Newman}, centralizers of non-trivial elements in $H$ are cyclic.
We also recall that, according to Lemma \ref{lem:hcs_sbgp}, $H$ contains a finite index normal subgroup $L$ which is hereditarily conjugacy separable. %Set $l=|H:L|$.

Let $H_1 \leqslant H$ be an arbitrary finite index subgroup and let $x \in H$ be an arbitrary element. We will show that the subset $x^{H_1}$ is separable in $H$
by considering two different cases.

\noindent\emph{Case 1:} $x$ has infinite order in $H$. Since $L$
is hereditarily conjugacy separable, $L_1=H_1\cap L$ is a normal
conjugacy separable subgroup of finite index in $H$. Set
$l=|H:L_1|$. Then $x^l \in L_1\setminus \{1\}$ and $C_H(x^l)$ is
infinite cyclic. It follows that for any $y \in H\setminus
x^{H_1}$, $y^l \notin (x^l)^{H_1}$. Indeed, if $x^l=h y^l h^{-1}$
for some $h \in H_1$, then both $x$ and $hyh^{-1}$ belong to the
infinite cyclic subgroup $C_H(x^l)$. But in the infinite cyclic
group any element can have at most one $l$-th root, thus
$x=hyh^{-1}$, contradicting the assumption that $y \notin
x^{H_1}$.

Since $L_1$ is conjugacy separable, $(x^l)^{L_1}$ is closed in the profinite topology of $L_1$, and since $|H:L_1|<\infty$ this implies that
$(x^l)^{L_1}$ is separable in $H$. Moreover, we can also deduce that the subset $(x^l)^{H_1}$ is separable in $H$, because it equals to
a finite union of conjugates of $(x^l)^{L_1}$, as $L_1$ has finite index in $H_1$. Since $y^l \notin (x^l)^{H_1}$, there are a finite group $Q$ and an epimorphism $\psi:H \to Q$
such that $\psi(y^l) \notin \psi\left((x^l)^{H_1}\right)=\left(\psi(x)^l\right)^{\psi(H_1)}$. Therefore $\psi(y) \notin \psi(x^{H_1})$ in $Q$, as required. Thus $x^{H_1}$ is separable in $H$.

\noindent\emph{Case 2:} $x$ has finite order in $H$. Note that we can assume that $x \neq 1$ in $H$ because otherwise $x^{H_1}=\{1\}$ is separable in $H$ as $H$ is residually finite
(by Wise's work \cite{Wise-qc-h} $H$ possesses a finite index subgroup that embeds into a right angled Artin group, and right angled Artin groups are
well-known to be residually finite). Now we are going to verify that all the assumptions of Lemma~\ref{lem:CCH->sep_c_c_for_sbgps} are satisfied.

\textbf{Claim I:} the conjugacy class $x^H$ is separable in $H$.

By the torsion theorem for HNN-extensions (\cite[IV.2.4]{L-S}), $x \in K^H$. Thus, without loss of generality, we can assume that $x \in K$.

Consider any element $y \in H\setminus x^H$. If $y$ has infinite order then, since $H$ is residually finite, there is a finite group $Q$ and an epimorphism $\psi: H \to Q$, such that the order of $\psi(y)$ in $Q$ is greater than the order of $x$ in $H$
(and, hence, of $\psi(x)$ in $Q$). It follows that $\psi(x)$ is not conjugate to $\psi(y)$ in $Q$.

Thus we can further suppose that $y$ also has finite order in $H$; as before this allows us to assume that $y \in K$. Consequently $y \in K \setminus x^K$, and by conjugacy separability of $K$,
we can find a finite index normal subgroup $K_0 \lhd K$ such that the images of $x$ and $y$, under the natural epimorphism $K \to K/K_0$, are not conjugate in $K/K_0$.

%Note that $x \notin M_i^H$ in $H$ for $i=1,2$, because each $M_i$ is free, and so it is torsion-free.
According to Lemmas \ref{lem:fi-sep} and \ref{lem:free_sbgp-cd},
$H$ contains finite index normal subgroups $N_1,N_2 \lhd H$ such that the image of $x$ in $H/N_i$ is not conjugate to the image of $M_i$ for $i=1,2$.
By Lemma \ref{lem:fi-sep}, $K_0$ is separable in $H$, hence there exists a finite index normal subgroup $N_0\lhd H$ such that $N_0 \cap K \subseteq K_0$.
Let $N'\lhd H$ and $K_1 \lhd K$ denote the finite index normal subgroups of $H$ and $K$ respectively, defined by $N'=N_0\cap N_1\cap N_2$ and $K_1=K \cap N'$.

Let $\xi:K \to K/K_1$ denote the natural epimorphism. Note that the isomorphism $\alpha:M_1\to M_2$ gives rise to the isomorphism $\bar\alpha:\xi(M_1) \to \xi(M_2)$, defined by
$\bar\alpha(\xi(g))=\xi(\alpha(g))$ for all $g \in M_1$. Indeed, the fact that $\bar\alpha$ is well-defined is essentially due to the construction of
$K_1$ as the intersection of $K$ with a normal subgroup $N'$ of $H$, and so $\xi$ is a
restriction to $K$ of $\tilde{\xi}:H \to H/N'$. Thus for any $g,h \in M_1$ with $\xi(g)=\xi(h)$ we have
$$\bar\alpha(\xi(g))=\tilde{\xi}(\alpha(g))=\tilde{\xi}(t g t^{-1})=\tilde{\xi}(t)\tilde{\xi}(g)\tilde{\xi}(t^{-1})=\tilde{\xi}(tht^{-1})=\tilde{\xi}(\alpha(h))=\bar\alpha(\xi(h)).$$

Let $\bar H$ be the HNN-extension of $K/K_1$ with associated subgroups $\xi(M_1)$ and $\xi(M_2)$,
defined by $$\bar H=\langle K/K_1, \bar t \,\|\, \bar t \xi(u) \bar t^{-1}=\bar\alpha(\xi(u)) \mbox{ for all } u \in U_1\rangle.$$
Note that $\bar H$ is virtually free since $|K/K_1|<\infty$ (see, for example, \cite[II.2.6, Prop. 11]{Serre}).
Clearly $\xi$ extends to a homomorphism $\eta:H \to \bar H$, given by $\eta(t)=\bar t$ and $\eta(g)=\xi(g)$ for all $g \in K$.

Let us show that $\eta(x)=\xi(x)$ is not conjugate to $\eta(y)=\xi(y)$ in $\bar H$.  Indeed, $\xi(y)\notin \xi(x)^{K/K_1}$ because the homomorphism $K\to K/K_0$ factors through $\xi$ by
construction (as $K_1=K \cap N' \subseteq K \cap N_0 \subseteq K_0$) and the images of $x$ and $y$ are not conjugate in $K/K_0$.
On the other hand, since $K_1 \subseteq N_1 \cap N_2$, we have $\xi(x) \notin \xi(M_i)^{K/K_1}$ for $i=1,2$. Therefore, $\xi(y) \notin \xi(x)^{\bar H}$ by Lemma \ref{lem:conj_in_HNN}.

It remains to recall that $\bar H$ is conjugacy separable by Dyer's theorem \cite{Dyer}, and so there exist a finite group $Q$ and a  homomorphism $\zeta:\bar H\to Q$ such that
$\zeta(\eta(y)) \notin \zeta(\eta(x))^Q$ in $Q$. Hence the homomorphism $\psi=\zeta \circ \eta:H \to Q$ distinguishes the conjugacy classes of $x$ and $y$, as required.
Thus we have shown that $x^H$ is separable in $H$.

\textbf{Claim II:} $x$ satisfies the Centralizer Condition {\cch} from Definition \ref{df:CCH}.

This will be proved similarly to Claim I. As above, without loss of generality, we can assume
that $x \in K$. Consider any finite index normal subgroup $P\lhd H$ and let $R=K \cap P$.

Since $K$ is hereditarily conjugacy separable by induction, the finite index subgroup $E=R\langle x \rangle \leqslant K$ is conjugacy separable.
Hence the subset $x^{E}=x^R$ is separable in $E$.
And since $|K:E|<\infty$ we see that $x^R$ is separable in $K$. Therefore we can apply Lemma \ref{lem:for_a_given_P_sep_cc->CCH} to find a finite index normal subgroup $K_0\lhd K$ such
that $K_0\leqslant R$ and the centralizer of the image of $x$  in $K/K_0$ is contained in the image of $C_K(x)R$ in $K/K_0$.

Arguing as in Claim I, we can choose finite index normal subgroups $N_0,N_1,N_2 \lhd H$ such that $K \cap N_0\subseteq K_0$, and the image of $x$ is not conjugate to the image of  $M_i$ in $H/N_i$ for $i=1,2$.
Set $N'=N_0 \cap N_1\cap N_2$ and $K_1=K \cap N'$. Similarly to Claim I, the homomorphism $\xi:K \to K/K_1$ extends to a homomorphism
$\eta:H \to \bar H$, where $\bar H$ is an HNN-extension of $K/K_1$ with associated subgroups $\xi(M_1)$ and $\xi(M_2)$.

Denote $\bar x =\eta(x)=\xi(x) \in K/K_1\leqslant \bar H$. As before, since $K_1 \leqslant N_i$, we have that $\bar x \notin \xi(M_i)^{K/K_1}$, $i=1,2$, and so we can use
Lemma~\ref{lem:conj_in_HNN} to conclude that $C_{\bar H}(\bar x)=C_{K/K_1}(\bar x)$. Recall that $K_1 \leqslant K_0$, hence the epimorphism from $K$ to $K/K_0$ factors through $\xi$.
Therefore in $\bar H$ we have
\begin{equation}\label{eq:cbh}
C_{\bar H}(\bar x) =C_{K/K_1}(\bar x)\subseteq \xi(C_{K}(x)RK_0) = \xi(C_K(x)R) \subseteq \eta(C_H(x)P),
\end{equation}
because $K_0 \leqslant R\leqslant P$ by construction.

Once again, $\bar H$ is virtually free and so is any subgroup of it. Therefore $\bar P \langle \bar x \rangle\leqslant \bar H$ is conjugacy separable
by Dyer's theorem \cite{Dyer}, where $\bar P=\eta(P)$ is a finite index normal subgroup of $\bar H$.
As above this yields that the subset ${\bar x}^{\bar P \langle \bar x \rangle}={\bar x}^{\bar P}$ is separable in $\bar H$. By Lemma \ref{lem:for_a_given_P_sep_cc->CCH}
there exists a finite index normal subgroup $\bar N \lhd \bar H$ such that $\bar N \leqslant \bar P$ and
\begin{equation}\label{eq:cq}
C_{\bar H/\bar N}(\zeta(\bar x))\subseteq \zeta\left(C_{\bar H}(\bar x)\bar P\right),
\end{equation}
where $\zeta:\bar H \to \bar H/\bar N$ is the natural epimorphism.

Let $N=\eta^{-1}(\bar N)$ be the full preimage of $\bar N$ in $H$, and let $\psi:H \to H/N$ be the natural homomorphism.
Then $\psi=\zeta \circ \eta$ and $\bar H /\bar {N}$ can be identified with $H/N$.
A combination of \eqref{eq:cq} and \eqref{eq:cbh} gives the following inclusion in $H/N$:
\begin{equation}\label{eq:ch}
    C_{H/N}(\psi(x)) \subseteq  \zeta\left(C_{\bar H}(\bar x)\bar P\right) \subseteq  \zeta\left( \eta(C_H(x)P) \bar P\right)=\psi (C_H(x)P).
\end{equation}

To finish the proof of Claim II it remains to show that $N \leqslant P$.
Since $\eta(N)=\bar N \leqslant \bar P=\eta(P)$, it is enough to prove that $\ker\eta\leqslant P$. To this end, observe that  $\ker\eta$ is the normal closure
of $K_1=\ker\xi$ in $H$ (this easily follows from the universal property of HNN-extensions and is left as an exercise for the reader).
Since $K_1\leqslant K_0 \leqslant R \leqslant P$ and $P\lhd H$, we see that the normal closure of $K_1$ in $H$ must also be contained in $P$. Thus $\ker\eta \leqslant P$,
implying that $N\leqslant P$, which finishes the proof of Claim II.

In order to apply Lemma~\ref{lem:CCH->sep_c_c_for_sbgps} we should also note that the subset $C_H(x)H_1$ splits in a finite union of left cosets modulo $H_1$ in $H$
because $|H:H_1|<\infty$, and hence this subset is separable in $H$. In view of Claims I, II we see that all of the assumptions of Lemma~\ref{lem:CCH->sep_c_c_for_sbgps}
are satisfied. Therefore $x^{H_1}$ is separable in $H$, and the consideration of Case 2 is finished.

Thus we have shown that $x^{H_1}$ is separable in $H$ for all $x \in H$ and any finite index subgroup $H_1 \leqslant H$. Since the profinite topology of a subgroup
is finer than the topology induced from the ambient group, we can conclude that $x^{H_1}$ is separable in $H_1$ whenever $x \in H_1$. Consequently
$H_1$ is conjugacy separable. Since $H_1$ was chosen as an arbitrary finite index subgroup of $H$, we see that $H$ is hereditarily conjugacy separable.
\end{proof}

\begin{proof}[Proof of Corollary \ref{cor:qc-cs}] Let $H \leqslant G$ be a quasiconvex subgroup. By Newman's theorem \cite[Thm. 2]{Newman}, for any $x \in H\setminus \{1\}$
 there is $g \in G$ such that $C_G(x)=\langle g \rangle$. Hence $x=g^k \in H$ for some $k \in \N$ and so the subset $C_G(x)H$ splits in a finite union of left cosets
 modulo $H$. Now, since $G$ is virtually compact special by \cite[Cor. 18.3]{Wise-qc-h}, quasiconvex subgroups are separable in $G$ by \cite[Thm. 7.3, Lemma 7.5]{H-W-1}.
 It follows that $H$ and, hence,  $C_G(x)H$ are separable in $G$, for an arbitrary $x \in H$ (if $x=1$ then $C_G(x)H=G$).

By Theorem \ref{thm:main}, $G$ is hereditarily conjugacy separable and so every element $x \in G$ satisfies ${\rm CC}_G$ (see \cite[Prop. 3.2]{M-RAAG}).
Therefore we can apply Lemma \ref{lem:CCH->sep_c_c_for_sbgps} to conclude that $x^H$ is separable in $G$ (and, hence, in $H$). Thus $H$ is conjugacy separable, as claimed.
\end{proof}

\end{document}